\newtheorem{theorem}{Theorem}[section]
\newtheorem{definition}[theorem]{Definition}
\newtheorem{remark}[theorem]{Remark}
\newtheorem{example}[theorem]{Example}
\newtheorem{corollary}[theorem]{Corollary}
\title{On ring-like structures of lattice-ordered numerical events} 
\author{Dietmar Dorninger and Helmut L\"anger}
\date{}
\begin{document}

\footnotetext{Support of the research of the second author by the Austrian Science Fund (FWF), project I~4579-N, entitled ``The many facets of orthomodularity'', as well as by \"OAD, project CZ~02/2019, entitled ``Function algebras and ordered structures related to logic and data fusion'', is gratefully acknowledged.}

\maketitle

\begin{abstract} 
Let $S$ be a set of states of a physical system. The probabilities $p(s)$ of the occurrence of an event when the system is in different states $s\in S$ define a function from $S$ to $[0,1]$ called a numerical event or, more accurately, an {\em $S$-probability}. Sets of $S$-probabilities ordered by the partial order of functions give rise to so called algebras of $S$-probabilities, in particular to the ones that are lattice-ordered. Among these there are the $\sigma$-algebras known from probability theory and the Hilbert-space logics which are important in quantum-mechanics. Any algebra of $S$-probabilities can serve as a quantum-logic, and it is of special interest when this logic turns out to be a Boolean algebra because then the observed physical system will be classical. Boolean algebras are in one-to-one correspondence to Boolean rings, and the question arises to find an analogue correspondence for lattice-ordered algebras of $S$-probabilities generalizing the correspondence between Boolean algebras and Boolean rings. We answer this question by defining ring-like structures of events (RLSEs). First, the structure of RLSEs is revealed and Boolean rings among RLSEs are characterized. Then we establish how RLSEs correspond to lattice-ordered algebras of numerical events. Further, functions for associating lattice-ordered algebras of $S$-probabilities to RLSEs are studied. It is shown that there are only two ways to assign lattice-ordered algebras of $S$-probabilities to RLSEs if one restricts the corresponding mappings to term functions over the underlying orthomodular lattice. These term functions are the very functions by which also Boolean algebras can be assigned to Boolean rings. 
\end{abstract}

{\bf AMS Subject Classification:} 06C15, 03G12, 81P16

{\bf Keywords:}  Numerical event, quantum logic, orthomodular lattice, Boolean ring, ring-like structure

\section{Introduction}

In axiomatic quantum mechanics orthomodular posets and in particular orthomodular lattices often serve as ``quantum logics'' which are considered characteristic for the behaviour of a physical system. If a quantum logic turns out to be a Boolean algebra then one will have reason to assume that one deals with a classical physical system. The elements of a quantum logic can also be interpreted as events, and a Boolean algebra then as the equivalent of a classical field of events as known from probability theory. Moreover, Boolean algebras are in one-to-one correspondence with Boolean rings, and these and generalizations of them will provide a suitable setting for sets of so called numerical events which we will study in this paper. First of all we explain what is meant by a numerical event.

Let $S$ be a set of states of a physical system and $p(s)$ the probability of the occurrence of an event when the system is in state $s\in S$. The function $p$ from $S$ to $[0,1]$ is called a {\em numerical event}, or alternatively more precisely an {\em $S$-probability} (cf.\ \cite{BM91}, \cite{BM93} and \cite{MT}).

Considering sets $P$ of $S$-probabilities including the constant functions $0$ and $1$ we assume that every $P$ is partially ordered by the order $\leq$ of functions. If the supremun or infimum of two elements $p,q$ exists within $P$, we will denote it by $p\vee q$ and $p\wedge q$, respectively. Further, $p+q$, $p-q$ and $np$ for $n\in\mathbb N$ shall denote the sum, difference and $n$-fold of $p$ and $q$ considered as real functions, respectively. Moreover, we will write $p'$ for $1-p$, and if $p\leq q'$, in which case $p$ and $q$ are called {\em orthogonal} to each other, we will indicate this by $p\perp q$. Next we suppose that such sets $P$ will give rise to the following structure.

\begin{definition}\label{ASP}
{\rm(}cf.\ {\rm\cite{BM91}} and {\rm\cite{MT})} An {\em algebra of $S$-probabilities} is a set $P$ of $S$-probabilities such that
\begin{enumerate}
\item[{\rm(A1)}] $0,1\in P$,
\item[{\rm(A2)}] $p'\in P$ for every $p\in P$,
\item[{\rm(A3)}] if $p\perp q\perp r\perp p$ for $p,q,r\in P$ then $p+q+r\in P$.
\end{enumerate}
\end{definition}

We point out that by putting $r=0$ in axiom (A3) one obtains: If $p\perp q$ then $p+q\in P$ and, as can be easily verified, in this case $p+q=p\vee q$.

If we assume that {\em $P$ is a lattice}, what we will do from now on, then $'$ is an orthocomplementation in this lattice and $(p\wedge q')+(p'\wedge q)=(p\wedge q')\vee(p'\wedge q)$ because $(p\wedge q')\perp(p'\wedge q)$ due to De Morgan's laws which hold in any algebra of $S$-probabilities. If $P$ is a Boolean algebra and one defines $p+_1 q:=(p\wedge q')+(p'\wedge q)$ and $pq:=p\wedge q$ one obtains a Boolean ring $(P,+_1,\cdot,0,1)$. In this case obviously $p+_11=p'=1-p$, and $p\perp q\perp r\perp p$ means that $p\wedge q=q\wedge r=r\wedge p=0$ from which one obtains that $p\vee q\vee r=p+_1q+_1r\in P$ in accordance with the above axioms which can be motivated this way.

That an algebra of $S$-probabilities is a lattice and hence an orthomodular lattice (because any algebra of $S$-probabilities is an orthomodular poset, cf.\ \cite{MT}) is a typical feature of Hilbert-space logics:

Let $H$ be a Hilbert space, $P(H)$ the set of orthogonal projectors of $H$, $S$ the set of one-dimensional subspaces of $H$, and for every $s\in S$ let $v_s$ be a fixed vector belonging to $s$. Then, denoting the inner product in $H$ by $\langle.,.\rangle$, the set of functions $\{s\mapsto\langle Av_s,v_s\rangle\mid A\in P(H)\}$ is an algebra of $S$-probabilities which is isomorphic to the lattice of closed subspaces of $H$ (cf.\ \cite{BM93}).

A crucial question is to find out whether a lattice-ordered algebra of $S$-probabilities $P$ is a Boolean algebra and hence corresponds to a Boolean ring. If this is the case the multiplication in this ring is given by $pq=p\wedge q$ and the addition by a term equivalent to $(p\wedge q')\vee(p'\wedge q)$. However, one wants to express the addition of Boolean rings preferably by the operation $+$ of functions of $P$, as above with $+_1$ or as in the following theorem.

\begin{theorem}\label{th1}
Let $P$ be an algebra of $S$-probabilities which is a lattice. Then $P$ is a Boolean algebra if and only if $p+q-2(p\wedge q)\le1$ for all $p,q\in P$. If this is the case the addition in the corresponding Boolean ring can be expressed by $p\widehat+q:=p+q-2(p\wedge q)$. 
\end{theorem}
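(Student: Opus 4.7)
The plan is to argue the two directions separately and then read off the ring-addition formula from the argument.

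For the forward direction, assume $P$ is a Boolean algebra. Then for any $p,q\in P$ one has $p=(p\wedge q)\vee(p\wedge q')$ by distributivity; since $p\wedge q$ and $p\wedge q'$ are orthogonal, this lattice join coincides with the pointwise sum $(p\wedge q)+(p\wedge q')$, so $p\wedge q'=p-(p\wedge q)$ as real-valued functions, and analogously $p'\wedge q=q-(p\wedge q)$. Consequently $p+_1q=(p\wedge q')\vee(p'\wedge q)=(p\wedge q')+(p'\wedge q)=p+q-2(p\wedge q)$, which belongs to $P\subseteq[0,1]^S$ and is therefore bounded by $1$.

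For the converse, assume $p+q-2(p\wedge q)\le1$ for all $p,q\in P$. I want to show that every pair is compatible in the orthomodular lattice $P$, i.e.\ $p=(p\wedge q)\vee(p\wedge q')$, which is well known to imply that $P$ is distributive and hence Boolean. Fix $p,q\in P$. Since $p\wedge q\le p$, orthomodularity yields $p=(p\wedge q)\vee r$ for $r:=p\wedge(p\wedge q)'=p\wedge(p'\vee q')\in P$, and because $r\perp p\wedge q$ the join is a sum, so $r=p-(p\wedge q)$ as a function. The inequality $p\wedge q'\le p\wedge(p'\vee q')=r$ is immediate; the task is the reverse inequality.

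The crux is to verify $r\le p\wedge q'$ by applying the hypothesis to the pair $(r,q)$. First observe $r\wedge q=0$: from $r\le p$ we get $r\wedge q\le p\wedge q$, and from $r\le(p\wedge q)'$ we get $r\wedge q\le r\wedge(p\wedge q)\le(p\wedge q)\wedge(p\wedge q)'=0$. Feeding $r,q$ into the hypothesis gives $r+q\le1$, i.e.\ $r\le q'$ as functions, and hence also as elements of $P$. Combined with $r\le p$ this forces $r\le p\wedge q'$, so $r=p\wedge q'$ and $p=(p\wedge q)\vee(p\wedge q')$. Thus every pair commutes and $P$ is a Boolean algebra. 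The last claim is already contained in the forward computation: in the Boolean ring we have $p+_1q=(p\wedge q')+(p'\wedge q)=p+q-2(p\wedge q)=p\,\widehat+\,q$.

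The main obstacle I anticipate is the orthomodular bookkeeping in steps two and three: one has to identify the right candidate $r=p\wedge(p'\vee q')$ for $p\wedge q'$, realize that $r$ again lies in $P$ so that the hypothesis can be reapplied to $(r,q)$, and exploit $r\wedge q=0$ to collapse the hypothesis into the useful inequality $r+q\le1$. Once this is in place, all remaining steps are routine consequences of the fact that orthogonal joins in $P$ agree with pointwise sums.
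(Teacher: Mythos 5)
Your proof is correct. The forward direction is essentially the paper's argument (decompose $p$ and $q$ over $p\wedge q$ using orthogonal joins $=$ pointwise sums, then recognize $p+q-2(p\wedge q)$ as the orthogonal join $(p\wedge q')\vee(p'\wedge q)\in P$). The converse, however, takes a genuinely different route. The paper only extracts from the hypothesis the implication ``$p\wedge q=0\Rightarrow p\perp q$'' (the \emph{Boolean} property for orthoposets) and then cites Tkadlec's result that a Boolean ortholattice is a Boolean algebra. You instead prove the needed consequence directly: for arbitrary $p,q$ you form $r=p\wedge(p\wedge q)'$, check $r\wedge q=0$, apply the hypothesis to the pair $(r,q)$ to get $r\le q'$ and hence $r\le p\wedge q'$, and combine this with orthomodularity ($p=(p\wedge q)\vee r$) to conclude $p=(p\wedge q)\vee(p\wedge q')$, i.e.\ that every pair commutes; Foulis--Holland then gives distributivity. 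In effect you have reproved the relevant (orthomodular) case of Tkadlec's theorem inside the argument. What the paper's route buys is brevity and a pointer to the general ortholattice result; what yours buys is self-containedness -- the reader needs only orthomodularity and the standard commutativity criterion, both of which the paper already uses elsewhere. All individual steps check out ($p\wedge q'\le r$, $r\in P$ so the hypothesis applies, and the pointwise inequality $r\le 1-q$ really is the order of $P$), so there is no gap.
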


\begin{proof}
Assume $P$ to be a Boolean algebra. Then
\[
p+q-2(p\wedge q)=(p-(p\wedge q))+(q-(p\wedge q))=(p\wedge(p\wedge q)')+(q\wedge(p\wedge q)'),
\]
because for any algebra $A$ of $S$-probabilities $x\ge y$ for $x,y\in A$ implies $x-y=x\wedge y'$ (cf.\ e.g.\ \cite{DDL}). Taking into account that $p\wedge(p\wedge q)'\perp q\wedge(p\wedge q)'$ as one can see immediately by comparing $p\wedge(p\wedge q)'=p\wedge q'$ and $(q\wedge(p\wedge q)')'= p\vee q'$ we therefore obtain $p+q-2(p\wedge q)=(p\wedge(p\wedge q)')\vee(q\wedge(p\wedge q)')\in P$ and hence $p+q-2(p\wedge q)\le1$. Conversely, if $p+q-2(p\wedge q)\le1$ then $p+q\le1$ if $p\wedge q=0$. This means that $p\wedge q=0$ implies $p\perp q$, a property known with partial orders as being Boolean (cf.\ \cite{MT} -- \cite{T91}) -- not to be mixed up with being a Boolean algebra. However, for ortholattices being Boolean actually means to be a Boolean algebra (cf.\ \cite{T91}). So we deal with a Boolean algebra. As already shown above, $p\widehat+q=(p\wedge(p\wedge q)')\vee(q\wedge(p\wedge q)')$. By means of a few term-transformations one can see that this expression equals $p+_1q$.
\end{proof}

If a lattice-ordered algebra of $S$-probabilities does not give rise to a Boolean ring, which in general will be the case, one will expect that it will correspond to some kind of structure generalizing a Boolean ring. This leads us to the following definition.

\begin{definition}\label{def1}
A {\em ring-like structure of events (RLSE)} is an algebra $(R,\oplus,\cdot,0,1)$ of type $(2,2,0,0)$ such that $(R,\cdot,1)$ is an idempotent commutative monoid with zero-element $0$ {\rm(}i.e.\ $x0\approx0${\rm)} satisfying the following identities:
\begin{enumerate}[{\rm(R1)}]
\item $x\oplus y\approx y\oplus x$,
\item $(xy\oplus1)(x\oplus1)+1\approx x$,
\item $((xy\oplus1)x\oplus1)x\approx xy$,
\item $xy\oplus(x\oplus1)\approx(xy\oplus1)x\oplus1$.
\end{enumerate}
\end{definition}

The Boolean rings considered above are examples of RLSEs as one can easily verify by defining $pq:=p\wedge q$, taking for $\oplus$ the additions $+_1$ or $\widehat+$, respectively and considering $0$ and $1$ of the Boolean rings as the nullary operations of the corresponding RLSE.

Moreover, any lattice-ordered algebra of $S$-probabilities $P$ constitutes an RLSE if one sets $pq:=p\wedge q$ and $p\oplus q:=(p\wedge q')\vee(p'\wedge q)$. In  this case axiom (R1) is clearly fulfilled, (R2) translates into the absorption law of lattices, (R3) reflects the orthomodularity of $P$ and (R4) can straightforward be verified. The question arises which other choices of $\oplus$ are there to associate algebras of $S$-probabilities to RLSEs or special subclasses of them, and, on the other hand, given an RLSE $R$, under what conditions can one find a lattice-ordered algebra of $S$-probabilities which corresponds to $R$. We will provide answers to both problems.

Our research is to some extent related to the study of the one-to-one correspondence of bounded lattices with an involutory antiautomorphism and pGBQRs (partial generalized Boolean quasirings). Like with RLSEs a pGBQR $\mathbf R$ is an idempotent commutative monoid with a zero-element in which a partial operation $\oplus:\{0,1\}\times R\rightarrow R$ is defined which satisfies the correspondingly modified axiom (R2) from above and in addition satisfies the identity $0\oplus x\approx x$ which, as we will see, also holds in an RLSE. For results within this general framework of pGBQRs cf.\ \cite{BDM} and \cite{DDL10} -- \cite{DLM01}.

\section{Ring-like structures of events}

First, we agree that for elements $x,y$ of an RLSE $(R,\oplus,\cdot,0,1)$ $\leq$ denotes the partial order relation induced by the meet-semilattice $(R,\cdot)$ and that  $x$ and $y$ shall be called {\em orthogonal} -- in symbols $x\perp y$ -- if $x\leq y'$.

For every algebra $\mathbf R=(R,\oplus,\cdot,0,1)$ of type $(2,2,0,0)$ put
\begin{align*}
       x' & :=x\oplus1,\\
  x\vee y & :=(x'y')', \\
x\wedge y & :=xy      
\end{align*}
for all $x,y\in R$ and $\mathbb L(\mathbf R):=(R,\vee,\wedge,{}',0,1)$.

\begin{theorem}\label{th2}
Let $\mathbf R=(R,\oplus,\cdot,0,1)$ be an algebra of type $(2,2,0,0)$. Then $\mathbf R$ is an {\rm RLSE} if and only if $\mathbb L(\mathbf R)$ is an orthomodular lattice and $\oplus$ satisfies the following conditions:
\begin{enumerate}[{\rm(i)}]
\item $x\oplus y\approx y\oplus x$,
\item $x\oplus 1\approx x'$, 
\item $x\oplus y=x\vee y$ for all $x,y\in R$ with $x\leq y'$.
\end{enumerate}
\end{theorem}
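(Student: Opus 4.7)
The plan is to prove both directions. Note that condition (ii) is tautological given the definition $x':=x\oplus 1$, and (i) is literally (R1); so the real content lies in passing between the RLSE axioms (R2), (R3), (R4) and the orthomodular lattice structure on $\mathbb L(\mathbf R)$, together with condition (iii).

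For the forward direction I would first extract involutivity $x''=x$ by instantiating (R2) with $y:=1$: using $x\cdot 1=x$ and idempotence of $\cdot$, the axiom collapses to $(x\oplus 1)\oplus 1=x$. With involution and the defining De Morgan formula for $\vee$ in hand, (R2) translates to the absorption law $(x\wedge y)\vee x=x$, and the dual absorption $x\wedge(x\vee y)=x$ follows by substituting $x\mapsto x'$ and $y\mapsto y'$ in (R2) and complementing; hence $(R,\wedge,\vee)$ is a lattice. Next I would specialise (R3) to $y:=0$: since $x\cdot 0=0$ and $0\oplus 1=1$, the left side reduces to $(x\oplus 1)x$, yielding $x\wedge x'=0$, and De Morgan gives $x\vee x'=1$, so $'$ is an orthocomplementation. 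Orthomodularity is the general content of (R3): rewritten it reads $((x\wedge y)'\wedge x)'\wedge x=x\wedge y$, and the substitution $y:=a$ for arbitrary $a\leq x$ produces the standard identity $(a'\wedge x)'\wedge x=a$. Thus $\mathbb L(\mathbf R)$ is an orthomodular lattice. Finally, for (iii), given $u\leq v'$, I would instantiate (R4) with $x:=v'$ and $y:=u$: this forces $xy=v'\wedge u=u$ and $x'=v$, and (R4) collapses directly to $u\oplus v=u\vee v$.

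For the backward direction, each RLSE axiom unpacks to a known orthomodular lattice identity: (R1) is (i); (R2) is absorption; (R3) is the dual orthomodular law $((x\wedge y)\vee x')\wedge x=x\wedge y$, valid since $x\wedge y\leq x$; and for (R4) both sides evaluate to $(x\wedge y)\vee x'$, the right side by De Morgan and the left by (iii) applied to the orthogonal pair $x\wedge y\leq x=(x')'$. The main subtlety is the forward direction, specifically identifying the correct instantiations $y:=1$ in (R2), $y:=0$ in (R3), and $x:=v'$, $y:=u$ in (R4); each unlocks a distinct piece of the orthomodular structure from axioms that superficially look like opaque ring-like identities.
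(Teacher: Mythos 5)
Your proof is correct and follows essentially the same route as the paper: involutivity from (R2) with $y=1$, the lattice structure from (R2), orthocomplementation from (R3) with $y=0$, orthomodularity from (R3) in general, and condition (iii) from (R4) via the substitution $x:=v'$, $y:=u$. The only cosmetic difference is that you establish the lattice structure via the two absorption laws (the second obtained by priming the variables in (R2)), whereas the paper argues that $'$ is an antitone involution on the meet-semilattice order, so that $(x'\wedge y')'$ computes suprema; both variants rely on the same instance of (R2) and both leave the routine verification that $0'=1$ implicit.
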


\begin{proof}
First assume $\mathbf R$ to be an RLSE. Then $(R,\wedge)$ is a meet-semilattice. Let $\leq$ denote the corresponding partial order relation on $R$. Then $(R,\leq,0,1)$ is a bounded poset and $x\wedge y=\inf(x,y)$ for all $x,y\in R$. Now (R2) can be written in the form
\begin{enumerate}
\item[(R2')] $((x\wedge y)'\wedge x')'\approx x$.
\end{enumerate}
Putting $y=1$ yields $x''\approx x$, i.e.\ $'$ is an involution. Hence (R2') is equivalent to
\begin{enumerate}
\item[(R2'')] $(x\wedge y)'\wedge x'\approx x'$,
\end{enumerate}
i.e.\ $'$ is antitone. Since $'$ is an antitone involution on $(R,\leq)$, we have $x\vee y=\sup(x,y)$ for all $x,y\in R$ and $(R,\vee,\wedge,0,1)$ is a bounded lattice. Now (R3) and (R4) can be rewritten in the form
\begin{enumerate}
\item[(R3')] $((x\wedge y)\vee x')\wedge x\approx x\wedge y$,
\item[(R4')] $(x\wedge y)\oplus x'\approx(x\wedge y)\vee x'$,
\end{enumerate}
respectively. (R3') says that $\mathbb L(\mathbf R)$ satisfies the orthomodular law. Putting $y=0$ in (R3') yields $x\wedge x'\approx0$ which implies $x\vee x'\approx1$ by De Morgan's laws. This shows that $\mathbb L(\mathbf R)$ is an orthomodular lattice. Now (i) follows from (R1), (ii) from the definition of ${}'$ and (iii) from (R4'). Conversely, assume that $\mathbb L(\mathbf R)$ is an orthomodular lattice and $\oplus$ satisfies (i) -- (iii). Then $(R,\cdot,1)$ is a commutative idempotent monoid satisfying the identity $x0\approx0$. Since $'$ is an antitone involution on $(R,\leq)$, identities (R2) -- (R4) are equivalent to identities (R2''), (R3') and (R4'), respectively. Thus (R2'') follows since $'$ is antitone, (R3') is the orthomodular law and (R4') ensues from (iii).
\end{proof} 

\begin{corollary}\label{cor1}
Let $(R,\oplus,\cdot,0,1)$ be an {\rm RLSE}. Then the following hold for $x,y\in R$:
\begin{enumerate}[{\rm(a)}]
\item $(x\oplus1)\oplus1\approx x$,
\item $x(x\oplus1)\approx0$, and as a consequence $1\oplus1\approx0$,
\item $x\oplus0\approx x$,
\item $x\oplus(x\oplus1)\approx1$,
\item $x\le y$ if and only if $y\oplus1\le x\oplus1$.
\end{enumerate}
\end{corollary}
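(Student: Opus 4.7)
The plan is to read everything off Theorem \ref{th2}, which tells us that $\mathbb L(\mathbf R)$ is an orthomodular lattice with orthocomplementation $x'=x\oplus1$, and that $\oplus$ behaves like $\vee$ on orthogonal pairs. Thus each of (a)--(e) reduces to a property of the orthomodular lattice $\mathbb L(\mathbf R)$ or to an instance of clause (iii) of Theorem \ref{th2}, together with the commutativity (i).

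For (a): by (ii), $(x\oplus1)\oplus1=(x')'$, and since $'$ is an involution on any ortholattice, this equals $x$. For (b): again by (ii), $x(x\oplus1)=x\wedge x'=0$ in any ortholattice; specializing to $x=1$ gives $1\cdot(1\oplus1)=0$, and since $1$ is the identity of $\cdot$ we obtain $1\oplus1=0$. For (c): we have $x\le1=0'$, so $x\perp0$, and clause (iii) together with commutativity (i) gives $x\oplus0=x\vee0=x$. For (d): since $'$ is an involution, $x\le x''=(x')'$, i.e.\ $x\perp x'$, so by (iii) and (ii), $x\oplus(x\oplus1)=x\oplus x'=x\vee x'=1$. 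For (e): this is just the antitonicity of the orthocomplementation $'\colon z\mapsto z\oplus1$, which holds in any orthomodular lattice and was in fact extracted in the proof of Theorem \ref{th2} as (R2'').

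I do not anticipate any real obstacle here: the whole point of Theorem \ref{th2} is to translate the ring-like axioms into the language of orthomodular lattices, and once that translation is in place each of (a)--(e) is either a standard identity of ortholattices or an immediate application of the orthogonal-sum clause (iii). The only bookkeeping worth flagging is the use of commutativity in (c) (to write $x\oplus0=0\oplus x$ and apply (iii) with the orthogonal pair $0\le x'$), and the mild trick in the second half of (b), where cancellation is avoided by simply invoking that $1$ is the multiplicative identity.
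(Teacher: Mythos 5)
Your proof is correct and follows exactly the route the paper intends: the corollary is stated without proof as an immediate consequence of Theorem~\ref{th2}, and each of (a)--(e) is, as you show, either a standard ortholattice identity for the antitone involution $x\mapsto x\oplus1$ or a direct instance of clause (iii). (Minor remark: in (c) you do not even need commutativity, since $x\le 1=0'$ lets you apply (iii) directly to the pair $(x,0)$.)
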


\begin{remark}\label{R4}
{\rm(R4)} is equivalent to
\begin{enumerate}
\item[{\rm(R4'')}] $x,y\in R$ and $x\perp y$ imply $(x\oplus y)\oplus1=(x\oplus1)(y\oplus1)$.
\end{enumerate}
\end{remark}

An RLSE $\mathbf R$ is called {\em weakly associative}, if it satisfies the identity $(x\oplus y)\oplus1\approx x\oplus(y\oplus1)$ (cf.\ the analogous definition for generalized Boolean quasirings in \cite{DLM01}).
 
\begin{theorem}\label{BR}
An {\rm RLSE} $\mathbf R=(R,\oplus,\cdot,0,1)$ is a Boolean ring if and only if it is weakly associative and satisfies the identity
\begin{enumerate}
\item[{\rm(T)}] $(xy'\oplus1)(x'y\oplus1)\oplus1\approx x\oplus y$.
\end{enumerate}
\end{theorem}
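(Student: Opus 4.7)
The forward direction is routine: if $\mathbf R$ is a Boolean ring then $\oplus$ is associative, hence weakly associative; using the standard Boolean-ring formula $x\oplus y=(x\wedge y')\vee(x'\wedge y)$ together with $'=\oplus 1$, identity (T) drops out by De Morgan once $\vee$ is expressed through $\cdot$ and $'$. The substance of the theorem lies in the converse, so the plan concentrates there.

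Assume weak associativity and (T). By Theorem \ref{th2}, $\mathbb L(\mathbf R)$ is an orthomodular lattice. First I would translate (T) into the lattice language: since $a'=a\oplus 1$ and $a\vee b=(a'b')'$, (T) asserts $x\oplus y=(x\wedge y')\vee(x'\wedge y)$ for all $x,y\in R$. Substituting $y'$ for $y$ and using Corollary \ref{cor1}(a) gives $x\oplus y'=(x\wedge y)\vee(x'\wedge y')$; weak associativity together with Corollary \ref{cor1}(a) yields $x\oplus y'=(x\oplus y)'$. Combining these three facts produces the key identity
\[
\bigl((x\wedge y')\vee(x'\wedge y)\bigr)'=(x\wedge y)\vee(x'\wedge y').
\]
In any orthomodular lattice the four elements $x\wedge y,\;x\wedge y',\;x'\wedge y,\;x'\wedge y'$ are pairwise orthogonal, so the displayed identity is equivalent to saying that their join equals $1$.

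From this I would deduce that every pair of elements commutes in $\mathbb L(\mathbf R)$. Set $a:=(x\wedge y)\vee(x\wedge y')\le x$ and $b:=(x'\wedge y)\vee(x'\wedge y')\le x'$; then $a\vee b=1$, and since $b\le x'$ we get $a\vee x'=1$, whence $a'\wedge x=0$. Because $a\le x$, the elements $a$ and $x$ are comparable and hence commute, so the Foulis--Holland distributive identity gives $x=(x\wedge a)\vee(x\wedge a')=a\vee 0=a$. Therefore $x=(x\wedge y)\vee(x\wedge y')$ for all $x,y$, which is exactly the commutativity of every pair in $\mathbb L(\mathbf R)$; hence $\mathbb L(\mathbf R)$ is a Boolean algebra. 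Once this is established, (T) says that $\oplus$ agrees with the symmetric difference of the associated Boolean ring, so $\mathbf R$ itself is that Boolean ring.

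The step I expect to require the most care is the translation phase: extracting the compact lattice identity above from (T) and weak associativity, and confirming that the four terms are pairwise orthogonal. Once the identity $(x\wedge y)\vee(x\wedge y')\vee(x'\wedge y)\vee(x'\wedge y')=1$ is in hand, the passage to commutativity via Foulis--Holland is standard, and the conclusion that $\mathbf R$ is a Boolean ring is immediate from (T).
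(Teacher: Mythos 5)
Your proof is correct and follows essentially the same route as the paper: both rewrite (T) as $x\oplus y=(x\wedge y')\vee(x'\wedge y)$, apply it also at $(x,y')$ and use weak associativity to obtain $(x\wedge y)\vee(x\wedge y')\vee(x'\wedge y)\vee(x'\wedge y')=1$, and then conclude that $\mathbb L(\mathbf R)$ is a Boolean algebra whose symmetric difference is $\oplus$. The only difference is that you spell out, via orthomodularity, the deduction of pairwise commutativity from this identity, a step the paper simply cites as well known.
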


\begin{proof}
Identity (T) can be rewritten in the form
\begin{enumerate}
\item[(T')] $(x\wedge y')\vee(x'\wedge y)\approx x\oplus y$.
\end{enumerate}
If $\mathbf R$ is a Boolean ring then it is weakly associative and due to distributivity satisfies (T). Conversely, if $\mathbf R$ is weakly associative and satisfies (T) and hence (T') then  
\begin{align*}
c(x,y):=(x\wedge y)\vee(x\wedge y')\vee(x'\wedge y)\vee(x'\wedge y')\approx\\ \approx((x\wedge y')\vee(x'\wedge y))\vee((x\wedge y)\vee(x'\wedge y'))\approx \\
                                                             \approx(x\oplus y)\vee(x\oplus y')\approx(x\oplus y)\vee(x\oplus y)'\approx1.
\end{align*}
As it is well known, $c(x,y)\approx1$ implies that all elements pairwise commute which means that $\mathbb L(\mathbf R)$ is a Boolean algebra. Identity (T') guarantees that this Boolean algebra is linked to a Boolean ring, namely $\mathbf R$.
\end{proof}

We point out that for an RLSE $\mathbf R$ the orthomodular lattice $\mathbb L(\mathbf R)$ can be a Boolean algebra without $\mathbf R$ being a Boolean ring, as the following example shows.

\begin{example}
If one defines a binary operation $\oplus$ on $2^{\{1,2\}}$ by
\[
A\oplus B:=\left\{
\begin{array}{ll}
\{1,2\}                  & \text{if }A=B\text{ and }|A|=1 \\
(A\cap B')\cup(A'\cap B) & \text{otherwise}
\end{array}
\right.
\]
for all $A,B\in2^{\{1,2\}}$ then $(2^{\{1,2\}},\oplus,\cap,\emptyset,\{1,2\})$ is an {\rm RLSE}, but not a Boolean ring since
\[
\{1\}\oplus\{1\}=\{1,2\}\neq\emptyset=\emptyset\cup\emptyset=(\{1\}\cap\{1\}')\cup(\{1\}'\cap\{1\}).
\]
\end{example}

Next we study the closeness of RLSEs to algebras of $S$-probabilities.

\begin{theorem}\label{RA}   
Let $\mathbf R=(R,\oplus,\cdot,0,1)$ be an {\rm RLSE} {\rm(}partially ordered by $\le${\rm)} and $Q$ a set of $S$-probabilities {\rm(}partially ordered by $\le$ of functions{\rm)} which is order-isomorphic to $\mathbf R$. Further, let $f$ denote this order-isomorphism from $(R,\leq)$ to $(Q,\leq)$. If $f(x\oplus y)=f(x)+f(y)$ for all $x,y\in R$ with $x\perp y$ then $\mathbf R$ is isomorphic to a lattice-ordered algebra of $S$-probabilities, and any lattice-ordered algebra of $S$-probabilities can be obtained this way.
\end{theorem}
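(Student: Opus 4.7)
The plan is to show that, under the hypothesis, $Q$ itself is a lattice-ordered algebra of $S$-probabilities on which $\mathbf R$ can be realized by transport of structure along $f$. The converse is straightforward.

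Two preliminary facts do most of the work. First, since the paper adopts the convention that a set of $S$-probabilities contains the constant functions $0$ and $1$, and $f$ is an order-isomorphism between posets with least and greatest elements, one has $f(0_R)=0$ and $f(1_R)=1$. Second, $x\perp x'$ is trivial ($x\le x''=x$), while Corollary \ref{cor1}(d) combined with $x'=x\oplus1$ yields $x\oplus x'=1$; the additive hypothesis then gives $f(x)+f(x')=f(1_R)=1$, i.e.\ $f(x')=1-f(x)$. Thus $f$ transports the orthocomplementation of $\mathbf R$ to the natural one on $[0,1]^S$.

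With these in hand, (A1) and (A2) for $Q$ are immediate. For (A3), given pairwise orthogonal $p,q,r\in Q$, I would pull back to $x=f^{-1}(p)$, $y=f^{-1}(q)$, $z=f^{-1}(r)$: combining $p+q\le1$ with $f(y')=1-f(y)$ yields $f(x)\le f(y')$, hence $x\le y'$, i.e.\ $x\perp y$, and similarly for the other two pairs. Theorem \ref{th2}(iii) then gives $x\oplus y=x\vee y$, while $z\le x'\wedge y'=(x\vee y)'$ shows $z\perp(x\oplus y)$. Applying the hypothesis a second time, $f((x\oplus y)\oplus z)=f(x\oplus y)+f(z)=p+q+r$, so $p+q+r\in Q$. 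As $Q$ is order-isomorphic to the orthomodular lattice $\mathbb L(\mathbf R)$ it is itself a lattice, so $Q$ is a lattice-ordered algebra of $S$-probabilities. Transporting $\oplus$ and $\cdot$ from $\mathbf R$ along $f$ turns $f$ into an RLSE-isomorphism; note that $f(xy)=f(x)\wedge f(y)$ because $f$ is a lattice-isomorphism, and on orthogonal pairs the transported $\oplus$ agrees with the canonical $(p\wedge q')\vee(p'\wedge q)$ since both reduce to $p+q$.

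For the converse, given a lattice-ordered algebra of $S$-probabilities $P$, I would take $\mathbf R:=P$ with the RLSE operations $pq:=p\wedge q$ and $p\oplus q:=(p\wedge q')\vee(p'\wedge q)$ from the discussion following Definition \ref{def1}, $Q:=P$ and $f:=\mathrm{id}_P$: for $p\perp q$ we have $p\wedge q\le q\wedge q'=0$, so $p\oplus q=p\vee q=p+q$, verifying the hypothesis. The chief obstacle is the verification of (A3), which forces one to extend the partial-additivity assumption from pairs to triples; this succeeds only because pairwise orthogonality of three elements in an orthomodular lattice implies $(x\vee y)\perp z$, and because Theorem \ref{th2}(iii) turns $\oplus$ into $\vee$ on orthogonal pairs, allowing the hypothesis to be reapplied.
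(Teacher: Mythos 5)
Your proposal is correct and follows essentially the same route as the paper: establish $f(0)=0$, $f(1)=1$ and $f(x')=(f(x))'$ from the additivity hypothesis applied to $x\perp x'$, verify (A1)--(A3) for $Q$ by pulling orthogonal triples back to $\mathbf R$ and applying the hypothesis twice via $x\oplus y=x\vee y$ and $(x\vee y)\perp z$, and realize the converse with $p\oplus q:=(p\wedge q')\vee(p'\wedge q)$ on $P$ itself. The only difference is cosmetic: you make the transport-of-structure step and the role of the convention $0,1\in Q$ explicit, which the paper leaves implicit.
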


\begin{proof}
Let $a,b,c\in R$. Since $f$ is an order-isomorphism, it is also a lattice isomorphism. Because $f(0)+f(0)=f(0\oplus0)=f(0\vee0)=f(0)$ we have $f(0)=0$. Now
\[
f(a)+f(a')=f(a\oplus a')=f(a\vee a')=f(1)=1
\]
whence $f(a')=1-f(a)=(f(a))'$. This shows $f(1)=f(0')=(f(0))'=0'=1$ and that $a\perp b$ if and only if $f(a)\perp f(b)$. Next we check (A1) -- (A3).
\begin{enumerate}
\item[(A1)] $0=f(0)\in Q$ and $1=f(1)\in Q$.
\item[(A2)] $(f(a))'=f(a')\in Q$.
\item[(A3)] If $f(a)\perp f(b)\perp f(c)\perp f(a)$ then $a\perp b\perp c\perp a$ and hence
\begin{align*}
f(a)+f(b)+f(c) & =(f(a)+f(b))+f(c)=f(a\oplus b)+f(c)=f(a\vee b)+f(c)= \\
               & =f((a\vee b)\oplus c)\in Q.
\end{align*}
\end{enumerate}
This confirms that $Q$ is a lattice-ordered algebra of $S$-probabilities. Conversely, let $Q_1$ be a lattice-ordered algebra of $S$-probabilities. Then $\mathbf Q_1:=(Q_1,\vee,\wedge,{}',0,1)$ is an orthomodular lattice. Put $p\oplus q:=(p\wedge q')\vee(p'\wedge q)$ for all $p,q\in Q_1$. Then $\mathbf R_1:=(Q_1,\oplus,\wedge,0,1)$ is an RLSE with $\mathbb L(\mathbf R_1)=\mathbf Q_1$. Obviously, the poset corresponding to $Q_1$ coincides with the poset corresponding to $\mathbf R_1$.
\end{proof}

To make sure that an order-isomorphism as stated in Theorem~\ref{RA} exists we first translate the well known notion of a state of a lattice to RLSEs:

A {\em state} on an RLSE $\mathbf R=(R,\oplus,\cdot,0,1)$ is a mapping $m$ from $R$ to $[0,1]$ such that $m(1)=1$ and $m(x\oplus y)=m(x)+m(y)$ for all $x,y\in R$ with $x\perp y$. Further, a set $S$ of states of $\mathbf R$ is said to be {\em full} if for $x,y\in R$ we have $x\le y$ if and only if $m(x)\leq m(y)$ for all $m\in S$. Of course, a state on $\mathbf R$ is the same as a state on the orthomodular lattice $\mathbb L(\mathbf R)$ according to Theorem~\ref{th2}.

Now, given a full set $S$ of states on an RLSE $\mathbf R$ and defining for every $x\in R$ a function $q_x:S\rightarrow[0,1]$ by $q_x(m):=m(x)$ for all $m\in S$, one obtains that the set $\{q_x\mid x\in R\}$ becomes an algebra of $S$-probabilities, as one knows from the more general case of orthomodular posets admitting a full set of states which are in one-to-one correspondence to algebras of $S$-probabilities (cf.\ \cite{MT}; in this paper the notion algebra of $S$-probabilities is not yet explicitly used). In view of Theorem~\ref{th2} we can hence observe

\begin{theorem}
An {\rm RLSE} is order-isomorphic to a set $P$ of $S$-probabilities if it admits a full set of states, in which case $P$ is a lattice-ordered algebra of $S$-probabilities.
\end{theorem}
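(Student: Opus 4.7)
The plan is to exhibit the order-isomorphism explicitly as the evaluation map and then reduce everything to Theorem~\ref{RA}. Concretely, given a full set $S$ of states on $\mathbf R$, I would consider the map $\varphi\colon R\to P$ defined by $\varphi(x):=q_x$, where $P:=\{q_x\mid x\in R\}$ is the prospective set of $S$-probabilities.

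First I would check that $\varphi$ is an order-isomorphism from $(R,\leq)$ onto $(P,\leq)$. Surjectivity is immediate from the definition of $P$. For the order equivalence, the defining property of a full set of states gives
\[
x\leq y\iff m(x)\leq m(y)\text{ for all }m\in S\iff q_x(m)\leq q_y(m)\text{ for all }m\in S\iff q_x\leq q_y,
\]
which in particular yields injectivity. Note also that $\varphi(0)=q_0$ is the constant function $0$ (since $m(0\oplus0)=m(0)+m(0)$ combined with $0\oplus0=0$ from Corollary~\ref{cor1}(c) forces $m(0)=0$) and $\varphi(1)=q_1$ is the constant function $1$ by definition of a state.

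Next I would verify the hypothesis of Theorem~\ref{RA}, namely $\varphi(x\oplus y)=\varphi(x)+\varphi(y)$ for all $x,y\in R$ with $x\perp y$. This is a one-line computation: for every $m\in S$,
\[
\varphi(x\oplus y)(m)=m(x\oplus y)=m(x)+m(y)=q_x(m)+q_y(m)=\bigl(\varphi(x)+\varphi(y)\bigr)(m),
\]
where the second equality uses the definition of a state on $\mathbf R$.

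Having produced an order-isomorphism $\varphi$ between $\mathbf R$ and the set of functions $P\subseteq[0,1]^S$ which respects orthogonal sums, Theorem~\ref{RA} applies directly: it yields that $P$ is a lattice-ordered algebra of $S$-probabilities and that $\mathbf R$ is isomorphic to it. There is no real obstacle here; the only point that requires a moment of care is that the definition of a state on $\mathbf R$ — which a priori only concerns $\oplus$ — is enough to conclude $m(0)=0$ and $m(x')=1-m(x)$, which I would note explicitly so that $P\subseteq[0,1]^S$ is visibly closed under the operations needed in Definition~\ref{ASP}.
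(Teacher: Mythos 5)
Your proof is correct and follows essentially the same route as the paper: the paper likewise builds $P$ as the set of evaluation functions $q_x(m):=m(x)$ on the full state space and concludes via the correspondence between order structure and $S$-probabilities. The only (harmless) difference is that the paper justifies the final step by citing the M\c{a}czy\'nski--Traczyk result on orthomodular posets with a full set of states together with Theorem~\ref{th2}, whereas you make the argument self-contained by verifying the hypotheses of Theorem~\ref{RA} directly, which is a perfectly valid and arguably cleaner reduction.
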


\section{Associating algebras of $S$-probabilities to RLSEs}
 
As already mentioned, any lattice-ordered algebra of $S$-probabilities $P$ can be turned into an RLSE, such that it is order-isomorphic to $P$ (see Theorem~\ref{RA}) and will then have a full set of states by what was said above. However, there will be many ways to align lattice-ordered algebras of $S$-probabilities to RLSEs:

\begin{remark}
Given a lattice-ordered algebra of $S$-probabilities $P$. If one defines $pq:=p\wedge q$ and $p\oplus q$ in such a way that the identities $p\oplus1\approx p'$, {\rm(R1)} and {\rm(R4)} hold for $P$, then $(P,\oplus,\cdot,0,1)$ becomes an {\rm RLSE} with the property that $p\oplus q=p+q$ for all $p,q\in P$ with $p\perp q$.
\end{remark}

From now on, when associating a lattice-ordered algebra of $S$-probabilities $P$ to an RLSE $(P,\oplus,\cdot,0,1)$ we will always assume that $P$ satisfies the identities $pq\approx p\wedge q$ and $p\oplus1\approx p'$. This then leaves to study operations $\oplus$ which are commutative and satisfy identity (R4).

As for the choice of the operation $\oplus$ we require that $x\oplus y$ should be representable by a binary term $t(x,y)$ in the operations $\vee,\wedge,{}'$ of $P$.

Examples for such terms $x\oplus y=t(x,y)$ satisfying identities (R1) -- (R4) are
\begin{align*}
       t_1(x,y) & :=(x'\wedge y)\vee(x\wedge y'), \\
\widehat t(x,y) & :=(x\wedge(x'\vee y'))\vee(y\wedge(x'\vee y')), \\
       t_2(x,y) & :=(x\vee y)\wedge(x'\vee y').
\end{align*}
As can be seen immediately $t_1(x,y)\le\widehat t(x,y)\le t_2(x,y)$. Further, due to the orthomodularity of $P$, one obtains that $\widehat t(x,y)=t_2(x,y)$.

We ask for all binary term functions $t(x,y)$ with $t_1(x,y)\le t(x,y)\le t_2(x,y)$ and first observe that only in case $P$ is a Boolean algebra $t_1(x,y)=t_2(x,y)$. Because, if $P$ is a Boolean algebra then, as a few calculations show, $t_1=t_2$, and conversely, if $t_1=t_2$ for all $x,y$ of a lattice-ordered algebra of $S$-probabilities $P$, then the function $c(x,y)$ (see the proof of Theorem~\ref{BR}) turns out to equal $t_1\vee t_2'=t_1\vee t_1'=1$ from which one can infer that $P$ is a Boolean algebra.

To see that $t_1\neq t_2$ the six-element algebra of $S$-probabilities with the four pairwise incomparable elements $a,a',b,b'$ between $0$ and $1$ can give an example: $t_1(a,b)=0$ whereas $t_2(a,b)=1$.

Next we turn our attention to finding all $t(x,y)$ between $t_1(x,y)$ and $t_2(x,y)$. For this end we agree on the restriction only to take into account those $t(x,y)$ which can be represented by binary terms in $\vee$, $\wedge$ and $'$ .

\begin{theorem}
For every orthomodular lattice $\mathbf L=(R,\vee,\wedge,{}',0,1)$ and hence, in particular, for every lattice-ordered algebra of $S$-probabilities, there exist exactly two {\rm RLSEs} $\mathbf R_i=(R,\oplus_i,\cdot,0,1)$ {\rm(}$i=1,2${\rm)} with $\mathbb L(\mathbf R_i)=\mathbf L$ such that $\oplus_i$ is a binary term in $\vee,\wedge,{}'$, namely
\begin{align*}
x\oplus_1y & :=(x\wedge y')\vee(x'\wedge y), \\
x\oplus_2y & :=(x\vee y)\wedge(x'\vee y').
\end{align*}
\end{theorem}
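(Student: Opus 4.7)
The plan is first to verify that both $\oplus_1$ and $\oplus_2$ satisfy the characterizing conditions of Theorem~\ref{th2}, so that each produces an RLSE with underlying orthomodular lattice $\mathbf L$, and then to show that no other binary term in $\vee, \wedge, {}'$ can play the role of $\oplus$. For $\oplus_1$ the verification was already noted after Definition~\ref{def1}. For $\oplus_2$: commutativity is immediate from the form; condition (ii) follows from $x\oplus_2 1=(x\vee1)\wedge(x'\vee0)=1\wedge x'=x'$; and for (iii), if $x\le y'$ then $y\le x'$, so $x'\vee y'\ge y\vee y'=1$, whence $x\oplus_2 y=(x\vee y)\wedge 1=x\vee y$.

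For uniqueness, I would suppose $t(x,y)$ is a binary term in $\vee,\wedge,{}'$ such that $\oplus:=t$ turns $(R,\oplus,\cdot,0,1)$ into an RLSE with $\mathbb L$-reduct $\mathbf L$. By Theorem~\ref{th2}, $t$ satisfies (i) $t(x,y)=t(y,x)$, (ii) $t(x,1)=x'$, and (iii) $t(x,y)=x\vee y$ whenever $x\le y'$. I want to show that as a term function on orthomodular lattices, $t$ coincides with $t_1$ or $t_2$. Since any binary OML-term function is determined by the element it represents in the free orthomodular lattice $F$ on two generators $x,y$, it suffices to show that $\tau:=t(x,y)\in F$ equals $t_1(x,y)$ or $t_2(x,y)$.

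At this point I would invoke the classical result of Bruns that $F\cong\mathbf 2^4\times\mathrm{MO}_2$, so its subdirectly irreducible quotients are precisely the two-element Boolean algebra $\mathbf 2$ and the modular ortholattice $\mathrm{MO}_2$ with four atoms on two perpendicular axes. Hence $\tau$ is determined by its images under all homomorphisms $F\to\mathbf 2$ and $F\to\mathrm{MO}_2$. In any Boolean quotient, (i)--(iii) evaluated on $\{0,1\}^2$ already force $t$ to be the symmetric difference, with which both $t_1$ and $t_2$ agree on Boolean algebras. In the quotient $F\to\mathrm{MO}_2$ sending $x\mapsto a$, $y\mapsto b$ (for atoms $a,b$ on different axes), commutativity forces $t(a,b)$ to be a fixed point of the order-automorphism swapping $a\leftrightarrow b$, and only $0$ and $1$ are fixed; conditions (ii) and (iii) impose no further constraint here, since $a\not\le b'$ in $\mathrm{MO}_2$. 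Because $t_1(a,b)=0$ and $t_2(a,b)=1$, the binary choice $t(a,b)\in\{0,1\}$ forces $\tau=t_1(x,y)$ or $\tau=t_2(x,y)$ in $F$.

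The main obstacle I anticipate is the appeal to the structural decomposition of $F$; a fully self-contained alternative is to enumerate the $96$ elements of $F$ and check (i)--(iii) case by case, which is elementary but tedious. The subdirect viewpoint has the conceptual advantage of localizing the $t_1$-vs-$t_2$ bifurcation to the single non-Boolean $\mathrm{MO}_2$-factor in $F$.
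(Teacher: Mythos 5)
Your argument is correct and arrives at the same classification as the paper, but by a genuinely different route. The paper's proof quotes Navara's explicit normal form for the ninety-six binary orthomodular-lattice terms as joins $\bigvee_{i\in I}t_i$ of eight minimal terms with $|I\cap\{5,\ldots,8\}|\in\{0,1,4\}$, and then eliminates index sets one at a time: condition (iii) forces $\{2,3\}\subseteq I$ and $4\notin I$, condition (ii) forces $1\notin I$, and the symmetry condition (i) is checked against each of the four sets $\{2,3,i\}$, $i\in\{5,\ldots,8\}$, leaving only $I=\{2,3\}$ (which is $\oplus_1$) and $I=\{2,3,5,6,7,8\}$ (which collapses to $\oplus_2$ by orthomodularity). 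You instead work with the subdirect decomposition $F\cong\mathbf 2^4\times \mathrm{MO}_2$ of the free orthomodular lattice on two generators --- the same $96$-element object, whose eight atoms are exactly Navara's $t_1,\ldots,t_8$ --- and read the answer off coordinatewise: the four Boolean coordinates of $\tau$ are forced to be the symmetric difference by (i)--(iii), while the single $\mathrm{MO}_2$-coordinate is pinned to $\{0,1\}$ by your fixed-point argument for the automorphism interchanging the two axes; this replaces the paper's fourfold case check of (i) by one structural observation, and it explains conceptually why the bifurcation into $\oplus_1$ and $\oplus_2$ is a single binary choice. Your existence verification for $\oplus_2$ via Theorem~\ref{th2} is also fine (the paper disposes of existence in the discussion preceding the theorem). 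Two minor caveats: the $\mathbf 2^4\times\mathrm{MO}_2$ description of the free orthomodular lattice on two generators is usually credited to Beran rather than Bruns; and, exactly like the paper, you apply (i)--(iii) as term identities (i.e., inside $F$, hence in $\mathrm{MO}_2$) although Theorem~\ref{th2} only guarantees them on the given $\mathbf L$ --- this is the intended reading of ``$\oplus_i$ is a binary term,'' so it is a shared imprecision of the statement rather than a defect of your proof.
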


\begin{proof}
We use the description of the ninety-six binary terms over orthomodular lattices published in \cite N. Consider the following binary terms:
\[
\begin{array}{l|l|l|l}
i & t_i(x,y) & t_i(x,1) & x\leq y' \\
\hline
1 & x\wedge y                          & x  & 0 \\
2 & x\wedge y'                         & 0  & x \\
3 & x'\wedge y                         & x' & y \\
4 & x'\wedge y'                        & 0  & x'\wedge y' \\
5 & x\wedge(x'\vee y)\wedge(x'\vee y') & 0  & 0 \\
6 & x'\wedge(x\vee y)\wedge(x\vee y')  & 0  & 0 \\
7 & y\wedge(x\vee y')\wedge(x'\vee y') & 0  & 0 \\
8 & y'\wedge(x\vee y)\wedge(x'\vee y)  & 0  & 0
\end{array}
\]
Then all ninety-six binary terms over orthomodular lattices are given by $\bigvee\limits_{i\in I}t_i$ where $I$ is a subset of $\{1,\ldots,8\}$ satisfying $|I\cap\{5,\ldots,8\}|\in\{0,1,4\}$. We have
\[
\bigvee_{i\in I}t_i=(x\vee y)\wedge(x\vee y')\wedge(x'\vee y)\wedge(x'\vee y')
\]
for all $I\subseteq\{5,\ldots,8\}$ with $|I|>1$. Let $t$ be a binary term satisfying
\begin{enumerate}[(i)]
\item $t(x,y)\approx t(y,x)$,
\item $t(x,1)\approx x'$,
\item $t(x,y)=x\vee y$ in case $x\leq y'$.
\end{enumerate}
Then there exists some $I\subseteq\{1,\ldots,8\}$ with $|I\cap\{5,\ldots,8\}|\in\{0,1,4\}$ satisfying $\bigvee\limits_{i\in I}t_i=t$. Because of (iii), $\{2,3\}\subseteq I\subseteq\{1,\ldots,8\}\setminus\{4\}$ and because of (ii) $1\notin I$. This shows that $I$ is one of the following sets:
\[
\{2,3\},\{2,3,i\}\text{ with }i\in\{5,\ldots,8\},\{1,\ldots,8\}\setminus\{1,4\}.
\]
If $I=\{2,3,5\}$ then
\begin{align*}
t(x,y) & =(x\wedge y')\vee(x'\wedge y)\vee(x\wedge(x'\vee y)\wedge(x'\vee y'))\neq \\
       & \neq(x\wedge y')\vee(x'\wedge y)\vee(y\wedge(x\vee y')\wedge(x'\vee y'))=t(y,x)
\end{align*}
contradicting (i). Hence $I\neq\{2,3,5\}$. Because of symmetry reasons,
\[
I\neq\{2,3,6\},\{2,3,7\},\{2,3,8\}.
\]
Therefore $I=\{2,3\}$ or $I=\{1,\ldots,8\}\setminus\{1,4\}$ which means either
\[
t=(x\wedge y')\vee(x'\wedge y)
\]
or
\begin{align*}
t & =(x\wedge y')\vee(x'\wedge y)\vee((x\vee y)\wedge(x\vee y')\wedge(x'\vee y)\wedge(x'\vee y'))= \\
  & =(x\wedge y')\vee(x'\wedge y)\vee(((x'\vee y)\wedge(x\vee y'))\wedge((x\vee y)\wedge(x'\vee y')))=(x\vee y)\wedge(x'\vee y').
\end{align*}
\end{proof}

To make the mapping of lattice-ordered algebras of $S$-probabilities to RLSEs unique and not requiring from the outset that $\oplus$ should be a binary term function, one can take into account the class of RLSEs satisfying the following identity:
\begin{enumerate}
\item [(R5)] $x\oplus y\approx x(y\oplus1)\oplus(x\oplus1)y$
\end{enumerate}

\begin{remark}
There is only one possibility to assign a lattice-ordered algebra of $S$-pro\-ba\-bi\-li\-ties to an {\rm  RLSE} that satisfies identity {\rm(R5)} by a binary term function $t(x,y)$, namely by choosing $t(x,y)=x\oplus_1y:=(x\wedge y')\vee(x'\wedge y)$.
\end{remark}

\begin{proof}
Since $x\wedge y'\perp x'\wedge y$ we obtain
\[
t(x,y)=x\oplus y=(x\wedge y')\oplus(x'\wedge y)=(x\wedge y')\vee(x'\wedge y).
\]
\end{proof}

Authors' addresses:

Dietmar Dorninger \\
TU Wien \\
Faculty of Mathematics and Geoinformation \\
Institute of Discrete Mathematics and Geometry \\
Wiedner Hauptstra\ss e 8-10 \\
1040 Vienna \\
Austria \\
dietmar.dorninger@tuwien.ac.at

Helmut L\"anger \\
TU Wien \\
Faculty of Mathematics and Geoinformation \\
Institute of Discrete Mathematics and Geometry \\
Wiedner Hauptstra\ss e 8-10 \\
1040 Vienna \\
Austria, and \\
Palack\'y University Olomouc \\
Faculty of Science \\
Department of Algebra and Geometry \\
17.\ listopadu 12 \\
771 46 Olomouc \\
Czech Republic \\
helmut.laenger@tuwien.ac.at
\end{document}